\documentclass[12pt]{amsart}
\usepackage[matrix,arrow]{xy}
\textwidth=6.5in
\calclayout
\newtheorem{dummy}{realdumb}[section]
\newtheorem{theorem}[dummy]{Theorem}
\newtheorem{lemma}[dummy]{Lemma}

\newtheorem{proposition}[dummy]{Proposition}
\theoremstyle{definition}               
\newtheorem{definition}[dummy]{Definition}
\newtheorem{conjecture}[dummy]{Conjecture}
\theoremstyle{remark}

\newtheorem{remark}[dummy]{Remark}
\newcommand{\bz}{\mathbb Z}

\newcommand{\br}{\mathbb R}

\newcommand{\bd}{\partial}

\DeclareMathOperator{\cw}{CW}

\setcounter{section}{-1}
\begin{document}
\hyphenation{}
\title{Wall's Finiteness Obstruction}
\author{Erik Kj\ae r Pedersen}
\address{Department of Mathematical Sciences, University of Copenhagen, Denmark}
\maketitle
\section*{Introduction}
\addtocounter{section}{1}
The purpose of this note is to give a self contained description of Wall's finiteness obstruction \cite{w7,w6}. One aim is to provide motivation for beginning students in a course on Algebraic $K$-Theory. In a final section we shall give some examples of applications and open questions relating to finiteness obstructions.

\section{Definitions and statements of theorems}
Throughout this note we work in the category of compactly generated topological spaces.
\begin{definition} \label{findom}
A connected topological space $X$ is finitely dominated if there exists a finite $\cw$-complex 
$K$ and maps $X\xrightarrow{i}K$ and $K\xrightarrow{r} X$ so that $r\circ i$ is homotopic
 to the identity.
\end{definition}

\begin{theorem} \label{tonetwo}
Let $X$ be finitely dominated. Then
\begin{enumerate}
\item $X$ is homotopy equivalent to a finite dimensional $\cw$-complex.
\item $X$ is homotopy equivalent to an infinite dimensional $\cw$-complex
with finite skeleta.
\item $X\times S^1$ is homotopy equivalent to a finite $\cw$-complex.
\item There is an invariant $\sigma(X) \in \tilde K_0(\bz[\pi_1(X,x)])$ so that $X$ is
homotopy equivalent to a finite $\cw$-complex if and only if $\sigma(X)=0$.
\item For every element $\sigma \in \tilde K_0(\bz[\pi_1(X,x)])$ there exists a finitely dominated
space $Y$ so that $\sigma(Y)=\sigma$.
\end{enumerate}
\end{theorem}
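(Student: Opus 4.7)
The plan is to prove the parts roughly in order, with (1) furnishing the main inductive construction, (2) arising by continuing it, (3) following from a mapping-torus trick, and (4) being the technical heart.

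For (1), I would inductively build a $\cw$-complex $Y$ with a map $f\colon Y\to X$ that is successively more highly connected: start with a finite wedge of circles realizing a generating set of $\pi_1(X,x)$, and at stage $n$ attach finitely many $(n{+}1)$-cells to kill the kernel of $f_*$ on $\pi_n$. The domination $K\xrightarrow{r}X\xrightarrow{i}K$ forces each relevant relative module over $\bz[\pi_1(X,x)]$ to be finitely generated (its counterpart in the finite complex $K$ is), so only finitely many cells are needed per stage, and the homotopy $r\circ i\simeq\mathrm{id}$ furnishes a swindle allowing termination at dimension $\dim K$. Running the construction forever without terminating gives (2). For (3), Mather's identity $T(ab)\simeq T(ba)$ applied to $(i,r)$ yields $X\times S^1\simeq T(\mathrm{id}_X)\simeq T(r\circ i)\simeq T(i\circ r)$, and $T(i\circ r)$ is finite since $K$ is.

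Part (4) is the crux. Using (1), choose a finite-dimensional $Y\simeq X$ of dimension $n$ and a finite subcomplex $Y_0\subset Y$ making $Y_0\hookrightarrow Y$ an $(n-1)$-equivalence. The relative cellular chain complex $C_*(\tilde Y,\tilde Y_0)$ of universal covers is then a bounded complex of finitely generated free $\bz[\pi_1(X,x)]$-modules whose homology is concentrated in degree $n$, and a standard inductive argument (truncate a length at a time, splitting off the projective kernel at each step) shows $H_n$ is finitely generated projective. Set $\sigma(X)=(-1)^n[H_n]\in\widetilde K_0(\bz[\pi_1(X,x)])$. The main obstacle is well-definedness: different choices of model and of subcomplex must be compared through a common refinement, with Schanuel's lemma yielding that the stable projective class depends only on $X$. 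The `only if' in the finiteness criterion is automatic from the definition; the `if' uses that once $H_n$ is stably free, one can wedge spheres onto $Y_0$ and realize a free basis as attaching maps, producing a finite $\cw$-complex homotopy equivalent to $X$.

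For (5), given a projective $P$ with $P\oplus Q\cong\bz[\pi]^m$, pick any finite $\cw$-complex $L$ with $\pi_1(L)=\pi$, set $W=L\vee\bigvee_{j=1}^{m}S^n$ for some $n\geq 2$, and realize the idempotent of $\bz[\pi]^m=\pi_n(W,L)$ projecting onto $P$ as a self-map of $W$ that fixes $L$. Its mapping telescope is a $\cw$-complex with finite skeleta, is dominated by its first finite stage through the idempotent, and has Wall invariant $\pm[P]$, so every class in $\widetilde K_0$ is realized.
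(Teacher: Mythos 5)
Your parts (1), (2), (3) follow the paper's strategy (cell-by-cell approximation, the Eilenberg swindle, Mather's mapping-torus trick), but part (4), which you correctly identify as the crux, has a genuine gap. You take a finite-dimensional model $Y\simeq X$ from (1), choose a finite subcomplex $Y_0$ with $Y_0\hookrightarrow Y$ an $(n-1)$-equivalence, and assert that $C_*(\tilde Y,\tilde Y_0)$ is a \emph{bounded complex of finitely generated free} $\bz[\pi]$-modules. That cannot be arranged: the finite-dimensional model produced by your swindle in (1) necessarily has infinitely many cells in its top two dimensions (a finite-dimensional complex with finitely many cells in each dimension is a finite complex, and producing one is exactly what the obstruction obstructs). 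So $C_*(\tilde Y,\tilde Y_0)$ is infinitely generated in the top degrees, your truncate-and-split argument does not apply, and finite generation and projectivity of $H_n$ do not follow. The paper avoids this circularity by first proving a chain-level statement (its Theorem 1.3, via Ranicki's explicit idempotent matrix): $S_*(\tilde X)$ is chain homotopy equivalent to a \emph{finite} complex of finitely generated projectives. Only then does it build a finite complex $L\to X$, inducting on the relative Hurewicz theorem and using that finite projective complex to see that each relative homology module is finitely generated and that the top one is projective; $\sigma(X)=\pm[P]$ is defined from that. You need this chain-level input (or Wall's original handle-trading equivalent) before any version of (4) can get off the ground; Schanuel's lemma then handles well-definedness as you say, and your stably-free argument for the ``if'' direction matches the paper's.

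Two smaller points. In (1), ``the homotopy $r\circ i\simeq \mathrm{id}$ furnishes a swindle allowing termination at dimension $\dim K$'' is misleading: you terminate only after knowing the top relative homology is a finitely generated projective $P$ (again requiring the chain-level lemma), and then only by attaching \emph{infinitely} many cells in two consecutive dimensions using $P\oplus(Q\oplus P)\oplus(Q\oplus P)\oplus\cdots\cong(P\oplus Q)\oplus(P\oplus Q)\oplus\cdots$. For (5) your construction is genuinely different from the paper's (which wedges on infinitely many spheres and attaches $(k+1)$-cells along the shift map of $Q\oplus P\oplus Q\oplus\cdots$): the mapping telescope of an idempotent self-map of $L\vee\bigvee^m S^n$ is a clean alternative, but you must verify that $e\circ e\simeq e$ as maps of spaces rel $L$ (not merely that $e_*$ is idempotent on $\pi_n(W,L)$), since that is what makes the telescope dominated by $W$; this does work here because $\pi_n(W)\cong\pi_n(L)\oplus\bz[\pi]^m$ splits and you can choose the columns of the idempotent matrix as the lifts.
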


From now on we will mostly abbreviate $\pi_1(X,x)$ to $\pi$. A key step in the proof is the following: 

\begin{theorem} \label{tonethree} A connected space $X$ is finitely dominated if and only if the following hold:
\begin{enumerate}
\item $X$ is homotopy equivalent to a $\cw$-complex.
\item $\pi_1(X,x)$ is finitely presented.
\item The singular chains of the universal cover $S_*(\tilde X)$ is chain homotopy equivalent as a chain complex of $\bz[\pi]$-modules to 
a finite length 
chain complex
\[ 0\to P_k\to P_{k-1}\to \ldots \to P_0\to 0\]
of finitely generated projective $\bz[\pi]$-modules
\end{enumerate}
\end{theorem}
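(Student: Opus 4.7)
The plan is to prove each implication separately.

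For the forward direction, suppose $X$ is dominated by a finite CW complex $K$ via $X\xrightarrow{i}K\xrightarrow{r}X$ with $r\circ i\simeq\mathrm{id}_X$. Property~(1) follows from the standard fact that a space dominated by a CW complex has the homotopy type of a CW complex; one concrete model is the mapping telescope of $i\circ r\colon K\to K$. Property~(2) holds because $\pi=\pi_1(X,x)$ is a retract of the finitely presented group $\pi_1(K)$, and retracts of finitely presented groups are finitely presented. For~(3), construct the regular covering $\hat K\to K$ classified by the surjection $r_*\colon\pi_1(K)\to\pi$; since $K$ is a finite CW complex, $\hat K$ is a finite-dimensional CW complex with free $\pi$-action and finitely many orbits of cells, so $C_*(\hat K)$ is a bounded chain complex of finitely generated free $\bz[\pi]$-modules. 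The maps $i$ and $r$ lift $\pi$-equivariantly to $\tilde i\colon\tilde X\to\hat K$ and $\tilde r\colon\hat K\to\tilde X$ with $\tilde r\circ\tilde i$ equivariantly homotopic to the identity, exhibiting $S_*(\tilde X)$ as chain-homotopy dominated by $C_*(\hat K)$. An algebraic lemma---proved by downward induction from the top nonzero degree, splitting off a finitely generated projective summand at each stage---then converts this chain-level domination into a chain-homotopy equivalence with a bounded complex of finitely generated projective $\bz[\pi]$-modules.

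For the converse, replace $X$ by a CW complex using~(1), and use~(2) to build a finite $2$-complex $K_2$ with a map $f_2\colon K_2\to X$ inducing an isomorphism on $\pi_1$. Inductively attach finitely many $(n+1)$-cells to $K_n$ along a generating set of the $\bz[\pi]$-module $\pi_{n+1}(f_n)\cong H_{n+1}(\tilde X,\tilde K_n)$, whose finite generation at each stage is supplied by~(3) via the long exact sequence comparing $C_*(\tilde K_n)$ with the finite projective complex $P_*$. Continue until $n$ strictly exceeds the length $k$ of $P_*$, obtaining a finite CW complex $K=K_n$ and an $n$-equivalence $f\colon K\to X$. Obstructions to lifting $\mathrm{id}_X$ along $f$ to a homotopy section $X\to K$ lie in the cohomology groups $H^{j}(X;\pi_{j-1}(F))$ for $j\ge n+2$, where $F$ is the homotopy fiber of $f$; since cohomology of $X$ with any local coefficients is computable from the bounded projective complex $P_*$ and so vanishes in degrees above $k$, and $k<n+1<j$, all obstructions vanish, and $K$ finitely dominates $X$.

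The main obstacle is the algebraic splitting lemma in the forward direction, which converts the chain-homotopy idempotent $\tilde i\circ\tilde r$ on the finite free complex $C_*(\hat K)$ into an honest bounded projective complex---equivalently, it splits a chain-level homotopy idempotent off from a finite free complex. In the reverse direction the delicate points are verifying finite generation of the relative homology modules at each inductive stage and running the obstruction-theoretic argument for the existence of the section. The \emph{projective rather than free} phenomenon lurking in the splitting lemma is precisely the data that, in Theorem~\ref{tonetwo}, will be packaged as Wall's obstruction $\sigma(X)\in\tilde K_0(\bz[\pi])$.
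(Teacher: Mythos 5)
Your converse direction is correct, and it takes a genuinely different route from the paper's. The paper never invokes obstruction theory: after building the finite complex $L$ with $H_i(\tilde X,\tilde L)=0$ for $i\ne k$ and $H_k(\tilde X,\tilde L)=P$ projective, it crosses with $S^1$ and uses the exact sequence $0\to (P\oplus Q)[t,t^{-1}]\xrightarrow{(1-t)\oplus 1}(P\oplus Q)[t,t^{-1}]\to P\to 0$ to attach finitely many $k$- and $(k+1)$-cells to $L\times S^1$, producing a finite complex homotopy equivalent to $X\times S^1$, which then dominates $X$. You instead run Wall's original argument: build a highly connected map $f\colon K\to X$ from a finite complex and section it by obstruction theory, using that $H^j(X;M)$ is computed from $P_*$ and vanishes for $j>k$. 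That is a legitimate classical alternative (your obstruction degrees are off by one --- the first obstruction sits in $H^{n+1}(X;\pi_n(F))$, not $H^{n+2}$ --- but this is harmless since you took $n>k$). Your verification of finite generation of $H_{n+1}(\tilde X,\tilde K_n)$ at each stage matches the paper's lemma on complexes of projectives with vanishing low homology. What the paper's route buys is the avoidance of obstruction theory altogether, at the cost of the $S^1$ detour; yours is more self-containedly geometric but needs the full apparatus of obstruction theory for sections of fibrations over a possibly infinite-dimensional base.

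The forward direction, however, has a genuine gap at exactly the point you flag as the main obstacle. Your setup is fine ((1) via the mapping telescope instead of the paper's Mather mapping-torus trick, (2) via retracts of finitely presented groups instead of the paper's $\pi\times\bz$ argument --- both work), but the ``algebraic lemma proved by downward induction from the top nonzero degree, splitting off a finitely generated projective summand at each stage'' is not a proof, and the indicated induction fails. The map $p=\tilde i\circ\tilde r$ on $C_*(\hat K)$ satisfies $p^2-p=s\bd+\bd s$ only up to a chain homotopy $s$; even in the top degree one gets $p_n^2-p_n=s_{n-1}\bd_n\ne 0$, so there is no degreewise idempotent and hence no summand to split off, in any degree. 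Splitting a chain-homotopy idempotent on a bounded complex of finitely generated free modules cannot be done degree by degree; every known proof stabilizes first. The paper's proof (Ranicki's ``instant'' construction) sets $F=C_0\oplus\cdots\oplus C_n$, writes down an explicit strict idempotent matrix $P$ on $F$ built from $i$, $r$, $s$ and $\bd$, forms the complex $0\to F_n\to\cdots\to F_1\to F\xrightarrow{P}F\xrightarrow{1-P}F\to\cdots$, checks it is chain equivalent to $S_*(\tilde X)$, and only then truncates to $\ker(P)$. Some such device (Ranicki's matrix, an algebraic mapping telescope, or Wall's Schanuel-type comparison) is unavoidable: the failure of the naive splitting is precisely the statement that the homotopy category of bounded f.g.\ free complexes is not idempotent complete, and the defect is the class $[\ker P]\in\tilde K_0(\bz[\pi])$ --- the finiteness obstruction itself. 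As written, your proof of the forward direction omits the one step that carries all the content of the theorem.
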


\begin{remark} \label{ronefour}
Assuming the theorem above we may consider
\[ [P_*] = \sum(-1)^i[P_i] \in K_0(\bz[\pi])\]
By a standard argument this is a well defined invariant of the chain homotopy type of $P_*$, and hence a well defined invariant of $X$. The inclusion $\bz\to \bz[\pi]$ followed by the augmentation $\bz[\pi]\to \bz$ provides a splitting
\[ K_0(\bz[\pi]) = K_0(\bz) \oplus \tilde K_0(\bz[\pi])= \bz\oplus \tilde K_0(\bz[\pi])\]
and we may write
\[ [P_*] = (\chi(X), \sigma(X))\]
It is easy to see that $\chi(X)$ is the usual Euler characteristic of $X$, and $\sigma(X)$ is now by definition the finiteness obstruction.
Observe that $\sigma(X)=0$ if $X$ is homotopy equivalent to a finite $\cw$-complex.
\end{remark}

To prove one direction of Theorem \ref{tonethree} we apply tricks due to M. Mather \cite{mat1} and A. Ranicki \cite{ra1}.
Preparing for Mather's trick we use

\begin{definition} Let $X$ be a space and $f:X\to X$ a self map. The mapping torus $T(X,f)$
is defined by 
\[T(X,f) = X\times [0,1]/(x,1)\sim (f(x),0)\]
\end{definition} 

\begin{lemma} The map $T(f): T(X,f)\to T(X,f)$ sending $(x,s)$ to $(f(x),s)$ is homotopic to the identity
\end{lemma}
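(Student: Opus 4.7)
The idea is to ``rotate'' each point once around the circle direction of the mapping torus. Since one full turn in the $s$-coordinate of $X\times [0,1]$ corresponds, under the gluing $(x,1)\sim (f(x),0)$, to applying $f$ to the $X$-coordinate, this rotation homotopy should interpolate between the identity and $T(f)$.

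Concretely, I would define $H\colon T(X,f)\times [0,1]\to T(X,f)$ by
\[
H_t[x,s] \;=\;
\begin{cases}
[x,\,s+t] & \text{if } s+t\le 1,\\[2pt]
[f(x),\,s+t-1] & \text{if } s+t\ge 1,
\end{cases}
\]
where $[\,\cdot\,,\,\cdot\,]$ denotes the equivalence class in $T(X,f)$. On the overlap $s+t=1$ the two formulas give $[x,1]$ and $[f(x),0]$, which agree by the defining relation, so $H_t$ is continuous and well-defined as a map out of $X\times [0,1]\times [0,1]$.

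Next I would check that $H$ descends to the quotient $T(X,f)\times [0,1]$: one must verify that the two representatives $[x,1]$ and $[f(x),0]$ of the same class are sent to the same point. For $s=1$ the formula yields $[f(x),t]$, and for $s=0$ applied to $f(x)$ it yields $[f(x),t]$ as well, so the identification is respected. Finally, $H_0[x,s]=[x,s]$ gives the identity, while at $t=1$ one has $s+1\ge 1$ so $H_1[x,s]=[f(x),s]=T(f)[x,s]$.

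There is no real obstacle here beyond bookkeeping: the only thing that could go wrong is the compatibility of the two cases of the formula with the gluing relation, and this is exactly what the piecewise definition is designed to respect. The argument uses nothing about $X$ or $f$ beyond continuity, and is the standard ``slide around the circle'' homotopy.
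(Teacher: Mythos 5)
Your proposal is correct and is essentially identical to the paper's proof, which uses the very same piecewise homotopy $H(x,s,t)$; you have simply added the (routine but worthwhile) verification that the formula is well defined on the quotient.
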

\begin{proof} A homotopy $H$ is provided by 
\[
H(x,s,t) = \begin{cases} (x,s+t) \qquad &\text{if  $s+t\le 1$}\\
(f(x),s+t-1) & \text{if  $s+t\ge 1$}
\end{cases}\]
\end{proof}
\begin{lemma} Let $X$ and $Y$ be topological spaces and $f:X\to Y$ and $g:Y\to X$ maps. Then
$T(X,g\circ f)$ and $T(Y,f\circ g)$ are homotopy equivalent spaces.
\end{lemma}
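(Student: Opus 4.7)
The plan is to construct explicit maps between the two mapping tori and then invoke the preceding lemma to show the compositions are homotopic to the identity.

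First I would define $F : T(X, g\circ f) \to T(Y, f\circ g)$ by the formula $F(x,s) = (f(x), s)$, and symmetrically $G : T(Y, f\circ g) \to T(X, g\circ f)$ by $G(y,s) = (g(y), s)$. The first thing to verify is that these formulas descend to the quotient: in $T(X, g\circ f)$ we have $(x,1) \sim (g(f(x)), 0)$, and under $F$ these map to $(f(x), 1)$ and $(f(g(f(x))), 0)$ respectively, which are identified in $T(Y, f\circ g)$. The check for $G$ is entirely analogous. Both maps are clearly continuous.

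Next I would compute the two compositions. By construction, $G \circ F$ sends $(x,s) \mapsto (g(f(x)), s)$, which is precisely the map $T(g\circ f) : T(X, g\circ f) \to T(X, g\circ f)$ of the previous lemma. Similarly $F \circ G$ equals $T(f \circ g)$. Applying the preceding lemma in each case gives that $G\circ F$ is homotopic to the identity on $T(X, g\circ f)$ and $F\circ G$ is homotopic to the identity on $T(Y, f\circ g)$, so $F$ and $G$ are mutually inverse homotopy equivalences.

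There is essentially no obstacle here; the only subtle point is the well-definedness check on the quotient, which is where the relations $(g\circ f)$ on one side and $(f\circ g)$ on the other fit together correctly. The elegance of the argument is that the compositions land exactly on the shift maps $T(g\circ f)$ and $T(f\circ g)$, so the previous lemma does all the real work.
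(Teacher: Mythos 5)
Your proof is correct and is exactly the paper's argument, just with the induced maps $F(x,s)=(f(x),s)$, $G(y,s)=(g(y),s)$ written out explicitly and the well-definedness on the quotient checked. The key observation in both cases is that the composites are the shift maps $T(g\circ f)$ and $T(f\circ g)$, which the preceding lemma shows are homotopic to the identity.
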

\begin{proof} $f$ and $g$ induce maps $T(X,g\circ f) \to T(Y,f\circ g)$ and $T(Y,f\circ g)$ and $T(X,g\circ f)$, and the composites
 are homotopic to the identities by the lemma above.
\end{proof}

\begin{lemma} If $f,g:X \to X$ are homotopic maps, then $T(X,f)$ and $T(X,g)$ are homotopy equivalent.
\end{lemma}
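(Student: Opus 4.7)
Let $H\colon X \times [0,1]\to X$ be a homotopy from $f$ to $g$, so $H(x,0)=f(x)$ and $H(x,1)=g(x)$. The plan is to write down explicit mutually inverse homotopy equivalences $\phi\colon T(X,f)\to T(X,g)$ and $\psi\colon T(X,g)\to T(X,f)$ built from $H$. The idea is to split the circle parameter of the mapping torus into two halves: the first half wraps once around, while the second half slides along the homotopy $H$ inside a single slice $X\times\{0\}$.

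Concretely, I would set
\[
\phi[x,s] = \begin{cases} [x,\,2s]_g & 0\le s\le 1/2, \\ [H(x,2-2s),\,0]_g & 1/2\le s\le 1, \end{cases}
\]
and define $\psi$ symmetrically by $\psi[y,s]=[y,2s]_f$ on $[0,1/2]$ and $\psi[y,s]=[H(y,2s-1),0]_f$ on $[1/2,1]$. Well-definedness reduces to checking that the two branches agree at $s=1/2$ (both giving $[g(x),0]_g$, resp.\ $[f(y),0]_f$) and that the mapping-torus identifications $(x,1)\sim(f(x),0)$ and $(y,1)\sim(g(y),0)$ go to consistent images.

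The composition $\psi\phi$ is then $[x,4s]_f$ on $[0,1/4]$ and, on $[1/4,1]$, is a path in the slice $X\times\{0\}\subset T(X,f)$ tracing the loop $s\mapsto [H(x,\gamma(s)),0]_f$ that runs from $f(x)$ out to $g(x)$ and back (so the parameter $\gamma$ runs $0\to1\to0$). This loop carries the explicit null-homotopy $(s,u)\mapsto H(x,(1-u)\gamma(s))$, which combines with the $[0,1/4]$ piece to produce a homotopy from $\psi\phi$ to the reparametrized identity $[x,s]\mapsto[x,\min(4s,1)]_f$; a final linear interpolation of reparametrizations $\sigma_u(s)=us+(1-u)\min(4s,1)$ then deforms this to $\mathrm{id}_{T(X,f)}$. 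The composition $\phi\psi$ is handled by the symmetric argument with $f$ and $g$ swapped.

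The only real obstacle here is the bookkeeping: verifying continuity and compatibility at the seam $s=1/2$ and at the mapping-torus identification for each stage of the two-step homotopy above. A more conceptual alternative would be to observe that $T(X,-)$ is a homotopy coequalizer of $(-)$ and $\mathrm{id}_X$ and so is automatically homotopy invariant in its input, but the explicit construction above fits more naturally with the preceding lemmas.
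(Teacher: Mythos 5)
Your proposal is correct and takes essentially the same route as the paper: the same pair of explicit maps (double speed on the first half of the circle parameter, sliding along $H$ into the slice $X\times\{0\}$ on the second half), followed by explicit homotopies of the composites to the identities. You merely spell out the null-homotopy and reparametrization steps that the paper dismisses as ``easy to see,'' and your sign conventions for $H$ are in fact the consistent ones.
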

\begin{proof}
Let $H$ be a homotopy from $f$ to $g$. Define $F:T(X,f)\to T(X,g)$ and $G:T(X,g)\to T(X,f)$ by
\[
F(x,s) = \begin{cases} (x,2s) \qquad &\text{if  $s\le \frac{1}{2}$}\\
(H(x,2s-1),0) & \text{if  $s\ge \frac{1}{2}$}
\end{cases}\]
\[
G(x,s) = \begin{cases} (x,2s) \qquad &\text{if  $s\le \frac{1}{2}$}\\
(H(x,2-2s),0) & \text{if  $s\ge \frac{1}{2}$}
\end{cases}\]
It is now easy to see that $F\circ G$ and $G\circ F$ are homotopic to the respective identities by 
writing down explicit homotopies.
\end{proof}

We are now ready to attack the proof of Theorem \ref{tonethree}. 

\begin{proof}[Proof of Theorem \ref{tonethree} one direction]
We assume $X$ is finitely dominated so we have $X\xrightarrow{i} K$ and
$K\xrightarrow{r} X$ with $r\circ i \simeq 1_X$. This means $T(X,r\circ i) \simeq
X\times S^1$ is homotopy equivalent to $T(K,i\circ r)$ which is a finite $\cw$-complex. 
Now since $X\times S^1$ is homotopy equivalent to a $\cw$-complex, so is the cyclic cover
$X\times \br$ which in turn is homotopy equivalent to $X$. From now on we replace $X$ by a $\cw$-complex. 
We choose a basepoint $x\in X$ and use the image as basepoint in $K$.
 The fundamental group of $T(K,i\circ r)$ is 
finitely presented since $T(K,i\circ r))$ is a finite $\cw$-complex. 
It follows that $\pi_1(X,x)\times \bz$ 
is finitely presented and by adding just one more relation $\pi_1(X,x)$ is finitely presented.

Now denote the map $i\circ r:K\to K$ by $p$. We know that $p\circ p$ is homotopic to $p$. We can assume that $r$ and $i$ preserve 
a basepoint, but we want the homotopy to preserve the basepoint as well, since this will provide us with an equivariant homotopy between 
$\tilde p\circ \tilde p$ and $\tilde p$. For this purpose we need a basepoint preserving 
homotopy from $r\circ i$ to $1_X$. We now choose a special domination namely $X \to X\times S^1 \simeq L$ with the obvious
 retraction. The homotopy equivalence $X\times S^1 \to L$ induces isomorphism of homotopy groups, and hence, since it is a map 
 of $\cw$-complexes, we may use Whitehead's theorem to produce a pointed homotopy equivalence inducing the given isomorphism 
on homotopy groups. This produces a pointed homotopy
between $p$ and $p\circ p$ and hence an equivariant homotopy between $\tilde p$ and $\tilde p \circ\tilde p$.
It is convenient to have the domination induce isomorphism on the fundamental group, but this may be obtained by adding
a $2$-cell to $L$.

Now consider the singular functor sending $[n]$ to 
\[ S_n(\tilde X)= \{\sigma:\Delta^n\to \tilde X\}\]
followed by realization
\[
|S_.(\tilde X)|= \coprod_n S_n(\tilde X)\times \Delta^n/\{\alpha_*\sigma(x)\simeq\sigma(\alpha^*(x))\}\] 
Evaluation gives a continuous map $|S_*(\tilde X)|\to \tilde X$ which is $\pi$-equivariant.
We approximate to a cellular map $\pi$-equivariantly and apply cellular chains on both sides. The cellular chains on the left side  
is the same as the (normalized) singular chain complex 
and the map induces an isomorphism in homology. Since we have chain complexes of free $\bz[\pi]$-modules, the chain complexes are 
chain homotopy equivalent. The proof of this direction is now finished by applying the following lemma due to A. Ranicki \cite{ra1}.
\end{proof}

\begin{lemma} 
Let $R$ be a ring and $A_*$  be a chain complex. Assume there exists a finite length
chain complex of finitely generated projective $R$-modules $C_*$, and chain maps
$i:A_*\to C_*$ and $r:C_* \to A_*$ so that $r\circ i$ is chain homotopic to the identity.
Then $A_*$ is homotopy equivalent to an infinite length chain complex of finitely generated projective modules
\[0\to F_n\to F_{n-1}\to \cdots\to F_1\to F\xrightarrow{P}
F\xrightarrow{1-P} F\xrightarrow{P} F\xrightarrow{1-P}\cdots\]
where $P:F\to F$ is a projection.
\end{lemma}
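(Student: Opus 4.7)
The plan is to exploit the chain-level homotopy idempotent $p = i \circ r : C_* \to C_*$, which satisfies $p \circ p \simeq p$ since $r \circ i \simeq 1_{A_*}$. The strategy is to split this idempotent by enlarging $C_*$ so that $p$ becomes a genuine projection, and then recover $A_*$ as its ``image'' via an infinite periodic tail of projective modules.

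First I would strictify the homotopy idempotent. Choose a chain homotopy $h$ with $\partial h + h \partial = p \circ p - p$, and use the triple $(C_*, p, h)$ to build a chain-equivalent complex in which a single module $F$ (a finite sum of a few $C_k$, hence finitely generated projective) carries an honest idempotent $P: F \to F$ extending the action of $p$. The formula for $P$ is a block matrix in $p$, $1-p$, and $h$, arranged so that $P^2 = P$ and $\partial P = P \partial$ hold on the nose.

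Next, I would form the tail
\[ \cdots \xrightarrow{1-P} F \xrightarrow{P} F \xrightarrow{1-P} F \xrightarrow{P} \cdots \]
Since $F = \ker P \oplus \im P$ internally, this splits as a direct sum of contractible two-term complexes of the form $K \xrightarrow{1} K$, and is therefore chain contractible. Splice it below a bounded top $F_n \to \cdots \to F_1$ of finitely generated projectives coming from the bulk of $C_*$, with a boundary map $F_1 \to F$ compatible with $P$ and with the differential of $C_*$, to form the target complex $D_*$. Then I would use $i$, $r$, and the splitting of $P$ to construct chain maps $A_* \to D_*$ and $D_* \to A_*$, verifying they are chain homotopy inverse by combining $r \circ i \simeq 1_{A_*}$ with the contractibility of the tail.

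The main obstacle is the strictification: upgrading a homotopy idempotent to one that strictly commutes with the differential. The single homotopy $h$ from $p^2$ to $p$ is only the first in a potential tower of coherences (for instance $ph \neq hp$ in general), so one must either pick $h$ carefully to satisfy the exact identities needed, or iterate the construction in the spirit of an Eilenberg swindle, absorbing the higher obstructions into an unboundedly large $F$. This is presumably where the periodic infinite tail in the statement originates: the ``enlarged $F$'' is in effect already a telescope, which is precisely the shape the conclusion packages for us.
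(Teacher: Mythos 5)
Your overall picture --- split the homotopy idempotent $p = i\circ r$ on $C_*$, with the infinite periodic tail playing the role of a telescope --- is the right way to think about the statement, but your argument has a genuine gap exactly where you flag it: the strictification of $p$ into an honest chain-level idempotent $P$ with $P^2=P$ and $\partial P = P\partial$. A single homotopy $h$ with $\partial h + h\partial = p^2 - p$ does not suffice; as you yourself note, one runs into a tower of higher coherences, and neither of your two escape routes (``pick $h$ carefully'' or ``iterate, absorbing the obstructions into a telescope'') is actually carried out. Since this step is the entire content of the lemma, the proposal as written is not a proof. There is also a smaller slip: the singly infinite tail $F\xrightarrow{P}F\xrightarrow{1-P}F\xrightarrow{P}\cdots$ appearing in the statement is \emph{not} contractible --- it is chain homotopy equivalent to $\ker(P)$ concentrated in a single degree (that is the point of the remark following the lemma) --- so splicing a ``contractible tail'' below a bounded top would lose precisely the projective summand the lemma is designed to produce.

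What you are missing is that the hypothesis hands you more than ``$p$ is a homotopy idempotent'': it gives a factorization $p = i\circ r$ with $r\circ i\simeq 1_{A_*}$, and the identity of $A_*$ is a \emph{strict} idempotent. Consequently all higher coherence data for $p$ is generated by the single chain homotopy $s$ on $A_*$ with $s\partial+\partial s = 1 - ri$: the expressions $is^kr$, $k\ge 0$, supply every term you would otherwise have to conjure. The paper (following Ranicki) sets $F = C_0\oplus\cdots\oplus C_n$, writes down an explicit matrix idempotent $P$ on $F$ whose entries are $\partial$, $ir$, $1-ir$, and $\pm is^kr$, verifies $P^2=P$ directly, and exhibits explicit chain maps $I=(i,is,is^2,\ldots)^{T}$ and $R=(r,0,0,\ldots)$ realizing the equivalence with $A_*$, the homotopy $IR\simeq 1$ being the projection $F_r\to F_{r+1}$. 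No splitting of a bare homotopy idempotent is ever needed. If you replace ``choose $h$ with $\partial h+h\partial = p^2-p$'' by ``transport the homotopy $s$ from $A_*$ via $i$ and $r$,'' the rest of your outline becomes the paper's computation.
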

\begin{remark} It is easy to see this chain complex in turn is chain homotopy equivalent to
\[0\to F_n\to F_{n-1}\to \cdots\to F_1\to \ker(P)\to 0
\]
which is what we have been aiming for. Notice that if the modules $C_i$ are free we get an infinite chain complex 
of free modules, but in the end we still only get a finite length chain complex of projective modules.
\end{remark}
\begin{proof}[Proof of lemma] 

Let $C_*: 0\to C_n\to \cdots\to C_1\to C_0\to 0$ be a dominating complex so
we have $A_*\xrightarrow{i}C_*$ and $C_*\xrightarrow{r}A_*$ and a chain
homotopy $s$ of $A_*$ so $s\bd + \bd s= 1 - r\circ i$. Define $F= C_0\oplus
C_1\oplus \cdots\oplus C_n$. Then
\[P = \begin{pmatrix} ir & \bd & 0 & 0 & 0 & \cdots\\
isr & 1-ir & -\bd & 0 & 0 & \cdots\\
is^2r & -isr & ir & \bd & 0 & \cdots\\
is^3r & -is^2r & isr & 1-ir & -\bd & \cdots\\
is^4r & -is^3r& is^2r & -isr & ir & \cdots\\
\vdots & \vdots & \vdots & \vdots & \vdots
\end{pmatrix} 
\]
has $P^2=P$. Defining
\[F_r = C_r\oplus C_{r+1} \oplus\cdots\oplus C_n\]
and $\bd: F_i \to F_{i-1}$ by
\[\begin{pmatrix} \bd & 0 & 0 & \cdots\\
1-ir & -\bd & 0 & \cdots\\
-isr & ir & \bd & \cdots\\
is^2r & isr & 1-ir & \cdots\\
\vdots & \vdots & \vdots
\end{pmatrix}
\]
we get a chain complex $F_*\quad 0 \to F_n \to \cdots \to F_1\to F
\xrightarrow{P} F\xrightarrow{1-P}F\cdots$
which one may check is homotopy equivalent to $A_*$ by
\[I:A_* \to F_* \qquad\text{and}\qquad R:F_*\to A_*\]
where
\[I = \begin{pmatrix} i\\ is \\ is^2\\ \vdots
\end{pmatrix}  : A_m \to F_m\]
and $R= (r,0,0,\ldots): F_m\to A_m$. Since $R\circ I = r\circ i \sim 1_A$
and the projection $F_r = C_r\oplus C_{r-1}\oplus \cdots \oplus C_n \to
F_{r+1} = C_{r+1} \oplus\cdots\oplus C_n$ is a homotopy from $IR$ to the
identity, we are done.
\end{proof}
We now consider the proof of the other direction of Theorem \ref{tonethree}. For this we need some lemmas.

\begin{lemma}
Given a pair of topological spaces $(X,Y)$ with $\pi_1(X,y)\cong\pi_1(Y,y)=\pi$ and a 
collection of elements $a_i\in\pi_k(X,Y), k\ge 2$ represented by 
$\alpha_i:(D^k,S^{k-1})\to (X,Y)$. 
Let $Z=\bigcup_i Y\cup _{\alpha_i}D^k$ and extend the map from $Y\to X$ to $Z\to X$ using $\alpha_i$. Then 
$H_l(\tilde X,\tilde Z)\cong H_l(\tilde X, \tilde Y)$ for $l\ne k, k+1$, $H_k(\tilde Z,
\tilde Y)=\oplus_i\bz[\pi]$ and we have an exact sequence
\[  0\to H_{k+1}(\tilde X,\tilde Y)\to H_{k+1}(\tilde X, \tilde Z)\to H_k(\tilde Z,\tilde Y)
\to H_k(\tilde X,\tilde Y)\to H_k(\tilde X,\tilde Z)\to 0
\]
and the map 
\[ \oplus_i\bz[\pi]\cong H_k(\tilde Z,\tilde Y)\to H_k(\tilde X,\tilde Y)
\]
is given by $\pi_k(X,Y,x)\cong \pi_k(\tilde X,\tilde Y,\tilde x)$ followed by the Hurewicz homomorphism applied to the $\alpha_i$.
\end{lemma}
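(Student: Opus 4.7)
The plan is to realize $\tilde Z$ explicitly as $\tilde Y$ with $\pi$-equivariantly attached $k$-cells, read off $H_*(\tilde Z,\tilde Y)$ from that description, and then feed everything into the long exact sequence of the triple $(\tilde X,\tilde Z,\tilde Y)$.

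First I would observe that attaching cells of dimension $k\ge 2$ does not change $\pi_1$, so $\pi_1(Y)\to\pi_1(Z)\to\pi_1(X)$ are all isomorphisms; hence we may pull back the universal cover $\tilde X\to X$ consistently to obtain $\tilde Y\subset\tilde Z\subset\tilde X$. Since $D^k$ is simply connected, each $\alpha_i$ lifts to a map $\tilde\alpha_i:(D^k,S^{k-1})\to(\tilde X,\tilde Y)$ once a basepoint lift is chosen, and the remaining lifts of $\alpha_i$ are exactly the $\pi$-translates $g\cdot\tilde\alpha_i$.

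Next, the description of $\tilde Z$ as the pullback of $\tilde X\to X$ along $Z\hookrightarrow X$ yields
\[ \tilde Z \;=\; \tilde Y \;\cup\; \coprod_{g\in\pi,\,i} D^k_{g,i}, \]
where $D^k_{g,i}$ is attached along $g\cdot\tilde\alpha_i|_{S^{k-1}}$. A direct cellular computation (or excision cell by cell) then shows that the relative singular chain complex is concentrated in degree $k$ and, as a $\bz[\pi]$-module, equals $\bigoplus_i\bz[\pi]$, generated by the cells $D^k_{1,i}$. Feeding this into the long exact sequence of the triple $(\tilde X,\tilde Z,\tilde Y)$ and using that $H_l(\tilde Z,\tilde Y)=0$ for $l\ne k$ immediately gives both the stated isomorphisms for $l\ne k,k+1$ and the five-term exact sequence near $k,k+1$.

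For the identification of the connecting map, the generator of the $i$-th summand of $H_k(\tilde Z,\tilde Y)$ is by construction the image under the Hurewicz map of the class of the characteristic map of $D^k_{1,i}$ in $\pi_k(\tilde Z,\tilde Y)$. Naturality of Hurewicz with respect to $(\tilde Z,\tilde Y)\hookrightarrow(\tilde X,\tilde Y)$ sends this generator to the Hurewicz image of $\tilde\alpha_i\in\pi_k(\tilde X,\tilde Y)$, which under the standard isomorphism $\pi_k(X,Y,x)\cong\pi_k(\tilde X,\tilde Y,\tilde x)$ (valid because $k\ge 2$) is the Hurewicz image of $a_i$. The step I expect to require the most care is the $\pi$-equivariant bookkeeping: checking that the lifted attaching maps $g\cdot\tilde\alpha_i$ really do account for every cell of $\tilde Z$ over $\tilde Y$, so that one obtains $\bigoplus_i\bz[\pi]$ as a $\bz[\pi]$-module, not merely as an abelian group.
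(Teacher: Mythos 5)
Your argument is essentially the paper's proof written out in full: the paper simply says to use the long exact sequence of the triple $(\tilde Z,\tilde Y)\to(\tilde X,\tilde Y)\to(\tilde X,\tilde Z)$ together with excision on $(\tilde Z,\tilde Y)$, and your equivariant cell-by-cell description of $\tilde Z$ over $\tilde Y$ is exactly the justification of that excision step and of the $\bz[\pi]$-module structure. The only point to phrase more carefully is the claim that attaching $k$-cells never changes $\pi_1$ when $k=2$; here it holds because the attaching circles are null-homotopic in $Y$ (as the paper notes after the lemma), since $\pi_1(Y)\to\pi_1(X)$ is an isomorphism and the loops bound disks in $X$.
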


Notice that if $k=2$, we have $\alpha_i:S^1\to Y$ is null homotopic since we have isomorphic fundamental groups. 

\begin{proof}
Use the long exact sequence of the triple $(\tilde Z,\tilde Y)\to (\tilde X,\tilde Y)\to
(\tilde X,\tilde Z)$ and excision on the pair $(\tilde Z,\tilde Y)$.
\end{proof}

\begin{lemma} Let $0\to P_n\to P_{n-1}\to \ldots \to P_2\to P_1\to P_0\to 0$ be a chain 
complex of finitely generated projective $R$-modules. Assume $H_i(P_*)=0$ for $i\le k$. Then 
$P_*$ is chain homotopy equivalent to a chain complex of finitely generated projective $R$-modules
of the form $0\to P_n\to \ldots P_{k+2}\to Q\to 0$. In particular $H_{k+1}(P_*)$ is finitely generated.
\end{lemma}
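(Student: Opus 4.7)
\emph{Proof proposal.} The plan is induction on $k$, stripping one projective off the bottom of the complex at each stage. The key step is the following truncation, applied when $H_0(P_*)=0$: in that case $\bd\colon P_1\to P_0$ is surjective, and since $P_0$ is projective the surjection splits via some $s\colon P_0\to P_1$ with $\bd s=1$. Setting $Q=\ker(\bd\colon P_1\to P_0)$ we get $P_1=Q\oplus s(P_0)$, so $Q$ is a direct summand of the finitely generated projective module $P_1$, hence itself finitely generated projective. Because $\bd^2=0$, the map $\bd\colon P_2\to P_1$ factors through $Q$, giving a truncated chain complex
\[
P'_*\colon\quad 0\to P_n\to\cdots\to P_2\to Q\to 0
\]
with $Q$ placed in degree $1$.

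To verify $P'_*\simeq P_*$, define a projection $\rho\colon P_*\to P'_*$ by the identity in degrees $\ge 2$, the projection $P_1\twoheadrightarrow Q$ along $s(P_0)$ in degree $1$, and zero in degree $0$; and an inclusion $\iota\colon P'_*\to P_*$ by the identity in degrees $\ge 2$ and the inclusion $Q\hookrightarrow P_1$ in degree $1$. Both are chain maps because $\bd$ vanishes on $Q$ and $\bd(P_2)\subseteq Q$. One checks $\rho\iota=1_{P'_*}$, and an explicit chain homotopy from $\iota\rho$ to $1_{P_*}$ is given by $h_0=s$ and $h_i=0$ for $i\ge 1$. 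The explicit splitting $s$ therefore gives an honest chain homotopy equivalence, without any appeal to a general fact about quasi-isomorphisms of bounded projective complexes.

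Now iterate: the new complex $P'_*$ satisfies $H_i(P'_*)=H_i(P_*)=0$ for $1\le i\le k$, so its bottom differential $\bd\colon P_2\to Q$ is surjective and the truncation applies again, replacing $Q$ with $\ker(\bd\colon P_2\to Q)$. After $k+1$ iterations we arrive at a complex of the desired form $0\to P_n\to\cdots\to P_{k+2}\to Q\to 0$ with $Q$ in degree $k+1$. The ``in particular'' clause is then immediate: $H_{k+1}(P_*)$ is isomorphic to the cokernel of $\bd\colon P_{k+2}\to Q$, which is a quotient of the finitely generated module $Q$. No serious obstacle arises; the one substantive input throughout is that a surjection onto a projective module splits.
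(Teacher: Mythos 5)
Your proof is correct and follows essentially the same route as the paper: since $H_0=0$ the bottom differential is a split surjection onto the projective $P_0$, so one peels off $P_0$ as a direct summand of $P_1$ and iterates until reaching degree $k+1$. You supply the explicit chain homotopy that the paper leaves implicit, but the underlying argument is identical.
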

\begin{proof} Assuming $k\ge 0$ we have $P_1\to P_0$ is onto and by projectivity we may split 
the map and write $P_1=P_0\oplus Q_1$ and the map projection on the first summand. If $k>0$ we may repeat this argument until we meet a non-vanishing homology group.  
\end{proof}
We are now ready for
\begin{proof}[Proof of Theorem \ref{tonethree} other direction]
Since $\pi_1(X,x)$ is finitely presented we can construct a connected pointed $2$-complex $K$ and a pointed map $K\to X$ inducing 
isomorphism on $\pi_1$. The Hurewicz theorem shows that $\pi_2(\tilde K,\tilde k)\to H_2(\tilde K)$
and $\pi_2(\tilde X,\tilde x)\to H_2(\tilde X)$ are isomorphisms. It now follows from the five lemma that 
$\pi_2(\tilde X,\tilde K,\tilde k)=H_2(\tilde X,\tilde K)$. It follows from
 the lemmas above that this is a finitely 
generated $\bz[\pi]$-module since it is the homology of the mapping cone of 
$C_*(\tilde K) \to P_*$, a finite length chain complex of finitely generated projective modules. Choose a generating set 
$\alpha_i:(D^2, S^1) \to (X,K)$ and attach cells to obtain a space $L$, and extend the map. It follows from the lemma above
 that $H_i(\tilde X,\tilde L)=0$ for $i\le 2$. We now continue using the relative Hurewicz theorem to attach cells to obtain a space 
we shall again denote as $L$ so that $H_i(\tilde X,\tilde L)=0$ for $i\ne k$ and $H_k(\tilde X,\tilde L) = P$ a projective 
$\bz[\pi]$-module.
Now consider $(\widetilde{X\times S^1}, \widetilde{L\times S^1})$. We have not changed the homology since the universal 
cover of $S^1$ is contractible. Choose a projective module $Q$ so that $P\oplus Q$ is free and consider the short exact sequence 
\[ 0\to P\otimes \bz[t,t^{-1}] \oplus Q\otimes \bz[t,t^{-1}] \xrightarrow{(1-t)\oplus 1}
P\otimes \bz[t,t^{-1}] \oplus Q\otimes \bz[t,t^{-1}] \to P \to 0
\]
The two first terms are finitely generated free $\bz[\pi][t,t^{-1}]$-modules, so we may first attach $k$-cells to 
$L\times S^1$ and then $(k+1)$-cells to obtain a finite complex which is homotopy equivalent
to $X\times S^1$ since the relative homology of the universal cover is trivial. This shows that $X$ is finitely dominated 
namely by a finite complex homotopy equivalent to $X\times S^1$ 
\end{proof}
We now turn to the proof of theorem \ref{tonetwo}
\begin{proof}[Proof of Theorem \ref{tonetwo}]
We continue the argument from the proof above and assume we have constructed a finite complex
$L$ and a map $L\to X$ so that $\pi_1(L,l)\to \pi_1(X,x)$ is an isomorphism and $H_l(\tilde X,\tilde L)$ is $0$ for $l\ne k$ and 
$H_k(\tilde X,\tilde L)$ is a finitely generated projective $\bz[\pi]$-module which we denote by $P$. Choose a finitely generated 
projective module $Q$ so that $P\oplus Q$ is free. Consider the short exact sequence 
\[0\to Q\oplus P\oplus Q\oplus P \oplus Q\ldots \to P\oplus Q\oplus P\oplus Q\oplus P\ldots \to P\to 0\]
where the map is given by shifting by one. The first two terms are free and we can use a basis to attach infinitely many $k$-cells 
corresponding to the middle term, and 
then infinitely many $(k+1)$-cells corresponding to the first term to obtain a finite dimensional $\cw$-complex so the relative 
homology of the universal cover is $0$ hence $X$ is homotopy equivalent to a finite dimensional $\cw$-complex.

To see $X$ is homotopy equivalent to an infinite dimensional $\cw$-complex with finite skeleta consider the short exact sequence
\[ 0\to Q\to P\oplus Q\to P\to 0
\]
The middle term is free, so we may attach $k$-cells to obtain a complex so the relative homology of the universal cover is $Q$ in dimension $k+1$. Then consider the short exact sequence
\[0\to P\to P\oplus  Q\to Q\to 0\]
and attach $(k+1)$-cells. Continuing this process ad infinitum we only attach finitely many cells in each dimension to obtain an 
infinite dimensional complex with the relative homology of universal covers $0$, hence homotopy equivalent to $X$.

We have already shown $X\times S^1$ is homotopy equivalent to a finite $\cw$-complex using Mather's trick, and we have shown it again
 above if we used the alternative definition of finitely dominated suggested by theorem 1.3. The element $0 \in \tilde K_0(\bz[\pi])$ is 
represented by a finitely generated free module so if $X$ is homotopy equivalent to a finite complex $K$ we see that $S_*(\tilde X)$
is homotopy equivalent to the cellular chains of $\tilde K$ and hence $\sigma(X)=0\in \tilde K_0(\bz[\pi])$.

Now we need to show that if $\sigma(X)=0$ then $X$ is homotopy equivalent to a finite complex. Constructing $L$ as above
it is easy to see that $\sigma(X)= \pm [P]$ according to whether $k$ is even or odd. If $\sigma(X)=0$ this means that $P$ 
is stably free, so we may construct a 
short exact sequence
\[0\to E\to F\to P\to 0\]
where $E$ and $F$ are finitely generated and free. We now attach finitely many $k$-cells corresponding to generators of $E$ and 
finitely many $(k+1)$-cells corresponding to generators of $F$ to obtain a finite complex homotopy equivalent to $X$.

Finally  let us show that for every element $\sigma \in \tilde K_0(\bz[\pi])$ there exists a finitely dominated space $Y$ with 
$\sigma(Y)=\sigma$. We may represent $\sigma$ by a finitely generated projective module $P$, and choose a finitely generated $Q$ so that
 $P\oplus Q$ is finitely generated free. Now choose a finite complex $K$ with $\pi_1(K,k)=\pi$. Let $L$ be $K$ wedge a countably infinite 
wedge of $k$ spheres $k\ge 2$. We now have $H_*(\tilde L,\tilde K)$ is $0$ except for $*=k$ and we may identify 
$H_k(\tilde L,\tilde K)$ with an infinite sum of $Q\oplus P$. Use the short exact sequence 
\[ 0\to Q\oplus P\oplus Q \ldots \to P\oplus Q\oplus P \oplus Q \ldots \to P\to 0\]
to attach $(k+1)$-cells to $L$ to obtain $Y$ such that $H_*(\tilde Y,\tilde K)$ is $0$ except for one dimension where it is $P$. 
Crossing with $S^1$ we may argue as above to show $Y\times S^1$ is homotopy equivalent to a finite complex, hence $Y$ is finitely 
dominated, and clearly $\sigma(Y)=\sigma$.

This ends the proof of Theorem 1.2.
\end{proof}
\section{Examples and conjectures}
It is natural to consider which finiteness obstructions may be realized for various classes of spaces. Here we give some examples of 
that and some conjectures. Recall an $H$-space is a pointed space $X$ together with a map $X\times X\to X$ so that the composite
\[ X\vee X \subset X\times X\to X\]
is homotopic to the folding map. Examples are topological groups and loop spaces.

\begin{conjecture} Let $X$ be a finitely dominated $H$-space, then $\sigma(X) = 0$.
\end{conjecture}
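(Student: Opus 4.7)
The plan is to exploit the H-space multiplication $\mu\colon X\times X\to X$ together with Hopf's theorem on the cohomology of H-spaces. Since $X$ is a connected H-space, the fundamental group $\pi = \pi_1(X,x)$ is abelian by the Eckmann--Hilton argument, so $\mu$ induces a genuine ring homomorphism $\bz[\pi\times\pi]\to\bz[\pi]$. Moreover, finite domination forces $H^*(X;\mathbb{Q})$ to be finite-dimensional, and Hopf's theorem realizes it as an exterior algebra on finitely many odd-degree generators; in particular $\chi(X)=0$ unless $X$ is rationally acyclic.

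The next step is to set up a product formula. Let $P_*$ be a finite-length complex of finitely generated projective $\bz[\pi]$-modules chain equivalent to $S_*(\tilde X)$, as produced by Theorem \ref{tonethree}. Then $P_*\otimes_\bz P_*$ is a finite-length projective complex over $\bz[\pi\times\pi]$ chain equivalent to $S_*(\widetilde{X\times X})$, and passing to $K_0$ yields
\[
\sigma(X\times X) \;=\; \chi(X)\cdot j_{1*}\sigma(X) + \chi(X)\cdot j_{2*}\sigma(X) + \sigma(X)\otimes\sigma(X)
\]
in $\tilde K_0(\bz[\pi\times\pi])$, where $j_i\colon \pi\hookrightarrow\pi\times\pi$ are the coordinate inclusions and the last term is the external product. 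When $\chi(X)=0$ the linear contributions drop out, leaving only the cross term. The final step is to push this identity forward along $\mu_*$: since $\mu$ restricted to either axis is homotopic to the identity, one has $\mu_*\circ j_{i*}=\mathrm{id}$, and so the pushforward should yield a nontrivial bilinear relation on $\sigma(X)$ that one hopes forces $\sigma(X)=0$.

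The hard part will be making the $\mu$-pushforward rigorous. The finiteness obstruction is not functorial under arbitrary continuous maps, and $\mu$ is only a homotopy-unital multiplication, not a group homomorphism, so realizing $\mu_*$ at the chain level requires producing a compatible map of finite projective models under the ring map $\bz[\pi\times\pi]\to\bz[\pi]$ induced by multiplication in $\pi$. Even granting this, extracting $\sigma(X)=0$ from a bilinear relation is nontrivial in the $\chi(X)=0$ case; the rationally acyclic sub-case will likely require the additional input that a connected H-space is grouplike, so that $X\simeq\Omega BX$ for a classifying space $BX$, reducing the finiteness question to one about $BX$. I suspect it is precisely these two difficulties that keep the statement a conjecture rather than a theorem.
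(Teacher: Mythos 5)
The statement you are trying to prove is presented in the paper as a \emph{conjecture}: the paper offers no proof of it, only verifications in special cases (the proposition covering infinite fundamental group, and the loop-space case via \cite{bknp1}), so there is no ``paper's proof'' to compare against, and your own closing paragraph correctly concedes that what you have is not a proof. Let me make the gaps concrete. The product formula you write is essentially Gersten's formula and is correct: it follows from $[P_*\otimes_{\bz}P_*]=[P_*]\otimes[P_*]$ in $K_0(\bz[\pi\times\pi])$, and Hopf's theorem does give $\chi(X)=0$ unless $X$ is rationally acyclic. The fatal step is ``push the identity forward along $\mu_*$.'' The finiteness obstruction is a homotopy invariant of a space, not a functor on spaces, so the only available meaning of $\mu_*$ is induction along the ring map $\bz[\pi\times\pi]\to\bz[\pi]$ coming from multiplication in $\pi$. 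Applying that induction to $\sigma(X\times X)=\sigma(X)\otimes\sigma(X)$ tells you what $\mu_*$ does to a class in $\tilde K_0(\bz[\pi\times\pi])$; nothing relates $\mu_*\sigma(X\times X)$ back to $\sigma(X)$, because $\mu$ is not a homotopy equivalence and induces no comparison of the finite projective chain models. The relation $\mu\circ j_i\simeq\mathrm{id}$ does give $\mu_*\circ j_{i*}=\mathrm{id}$ on $K_0$, but the terms it would act on have already been killed by $\chi(X)=0$, so it buys nothing. More fundamentally, the product formula is a \emph{computation} of $\sigma(X\times X)$ valid for every finitely dominated space; since the only H-space input used so far is $\chi(X)=0$, the resulting identity cannot possibly constrain $\sigma(X)$. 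Even a genuine bilinear relation of the form $\sigma(X)=\mu_*(\sigma(X)\otimes\sigma(X))$ would not force $\sigma(X)=0$ in a group like $\tilde K_0(\bz[\pi])$.

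Two further points. Your fallback for the rationally acyclic case assumes every connected H-space is of the form $\Omega BX$; this requires (at least) homotopy associativity and in fact an $A_\infty$-structure, and its failure in general is precisely why the paper records the loop-space case \cite{bknp1} as a separately verified instance of the conjecture rather than as the general statement. Finally, the one case the paper does prove goes by a completely different and more elementary mechanism: if $\pi_1(X,x)$ is infinite, it is a finitely generated abelian group with a $\bz$ summand, one uses a cohomology class to split $X\simeq Y\times S^1$ with $Y$ the (finitely dominated) homotopy fibre of a map $X\to S^1$, and then $\sigma(X)=0$ because a product with $S^1$ is homotopy finite by Theorem \ref{tonetwo}. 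If you want to attack the remaining (finite fundamental group) case, it is that circle-splitting idea, not the product formula, that has actually been made to work.
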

For $H$-spaces that are $\cw$-complexes G. Mislin \cite{mislin1} has shown that $X$ is finitely dominated if and only if $\oplus H_i(X,\bz)$ is a finitely 
generated abelian group.
The conjecture above has been verified in a number of cases.
\begin{proposition} If $\pi_1(X,x)$ is infinite, then $\sigma(X)=0$.
\end{proposition}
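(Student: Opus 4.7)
The plan is to show that $X$ is homotopy equivalent to a product $\bar X \times S^1$, where $\bar X$ is a suitable infinite cyclic cover of $X$. Once this is in hand, the proposition follows quickly: by Theorem~\ref{tonetwo}(3), $X \times S^1$ is a finite $\cw$-complex, so $\bar X \times S^1 \times S^1 \simeq X \times S^1$ is finite and dominates $\bar X$ (via the projection onto the first factor), making $\bar X$ itself finitely dominated. Applying Theorem~\ref{tonetwo}(3) to $\bar X$ gives that $\bar X \times S^1 \simeq X$ is a finite $\cw$-complex, whence $\sigma(X) = 0$.

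To produce the splitting, first observe that $\pi = \pi_1(X,x)$ is finitely generated abelian: finitely generated (even finitely presented) by Theorem~\ref{tonethree}, and abelian by the standard Eckmann--Hilton argument comparing the concatenation product with the $H$-space multiplication on $\pi_1$. Since $\pi$ is an infinite finitely generated abelian group, I pick $g \in \pi$ of infinite order lying in some $\bz$ direct summand together with a retraction $p : \pi \to \bz$ satisfying $p(g)=1$. Let $\bar X \to X$ be the regular covering corresponding to $\ker p$, and let $\bar\tau : \bar X \to \bar X$ generate its deck group $\bz$. The natural map $T(\bar X, \bar\tau) \to X$ coming from the covering projection is a fiber bundle with fiber homeomorphic to $\br$, and in particular a homotopy equivalence, so $X \simeq T(\bar X, \bar\tau)$.

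The heart of the argument is to show $\bar\tau \simeq 1_{\bar X}$ as a free self-map. Choose a loop $\gamma : (I, \partial I) \to (X, e)$ representing $g$, where $e$ is the $H$-space unit, and consider $\Phi : I \times X \to X$ given by $\Phi(t, y) = \mu(\gamma(t), y)$. By the unit axiom, $\Phi(0,-) = \mu(e,-) \simeq 1_X$ and $\Phi(1,-) \simeq 1_X$, so $\Phi_*$ is the identity on $\pi = \pi_1(I \times X)$. Composing with the covering $\bar X \to X$ in the second factor, the resulting map $I \times \bar X \to X$ lifts to $\bar\Phi : I \times \bar X \to \bar X$ because $\Phi_*$ carries $\pi_1(I \times \bar X) = \ker p$ into itself; pick the lift with $\bar\Phi(0,-) \simeq 1_{\bar X}$. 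Tracking the basepoint $\bar e \in \bar X$, the path $t \mapsto \bar\Phi(t, \bar e)$ is the lift of $\gamma$ starting at $\bar e$, which ends at $\bar\tau(\bar e)$ because $p(g)=1$; since $\bar\Phi(1,-)$ covers $1_X$ and is therefore a deck transformation, it must equal $\bar\tau$. Thus $\bar\Phi$ is the desired homotopy, and the earlier lemma that $T(\bar X, f) \simeq T(\bar X, g)$ whenever $f \simeq g$ yields $X \simeq T(\bar X, \bar\tau) \simeq T(\bar X, 1_{\bar X}) = \bar X \times S^1$.

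The step I expect to take the most care is the lifting and basepoint bookkeeping in the previous paragraph: one has to confirm that $\Phi$ really lifts to $\bar X$ in the second factor (immediate because $\Phi_* = 1_\pi$ preserves $\ker p$) and that its endpoint $\bar\Phi(1,-)$ is precisely the chosen generator $\bar\tau$ rather than some other power of it, which is forced by the normalization $p(g) = 1$. A minor annoyance is that $e$ is only a homotopy unit, not a strict one, so a small amount of extra homotopy-splicing is needed to turn $\bar\Phi(0,-) \simeq 1_{\bar X}$ into an honest endpoint of a homotopy from $1_{\bar X}$ to $\bar\tau$, but this does not change the structure of the argument.
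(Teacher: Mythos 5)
Your proof is correct, but it reaches the splitting $X\simeq(\text{infinite cyclic cover})\times S^1$ by a genuinely different route than the paper. The paper also begins with $\pi_1(X,x)\cong\bz\oplus A$, but then represents the projection onto $\bz$ by a map $X\to S^1$, takes $Y$ to be its homotopy fibre, and observes that the composite $Y\times S^1\to X\times X\xrightarrow{\mu}X$ induces an isomorphism on all homotopy groups, so that $X\simeq Y\times S^1$ by Whitehead's theorem; the conclusion $\sigma(X)=0$ then follows exactly as in your first paragraph. Your $\bar X$ is homotopy equivalent to that fibre $Y$, but you produce the product structure differently: you identify $X$ with the mapping torus $T(\bar X,\bar\tau)$ of the generating deck transformation and use translation by a loop representing $g$ to show $\bar\tau\simeq 1_{\bar X}$, so that $T(\bar X,\bar\tau)\simeq T(\bar X,1_{\bar X})=\bar X\times S^1$ by the mapping-torus lemmas of Section 1. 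The paper's version is shorter: the $H$-space multiplication is used once to write down the candidate equivalence, and Whitehead's theorem does the rest. Yours avoids the homotopy-group computation for the fibration but pays with the covering-space and basepoint bookkeeping you describe; the issues you flag (that $\bar\Phi(1,-)$ covers $\mu(e,-)$ rather than $1_X$ on the nose, and that $e$ is only a homotopy unit) are indeed minor, since only the free homotopy class of $\bar\tau$ matters and homotopic self-maps have homotopy equivalent mapping tori. A small bonus of your version is that it reuses the Mather-trick machinery already developed in the paper rather than invoking Whitehead's theorem a second time.
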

\begin{proof} Being an $H$-space $\pi_1(X,x)$ is abelian, so being infinite $\pi_1(X,x)\cong \bz\oplus A$. Now let $S^1\to X$ represent 
a generator of $\bz$ and use the projection $\bz\oplus A\to \bz$ to define an element in $H^1(X,\bz)$ which in turn may be 
represented by a map $X\to S^1$. The composite $S^1\to X\to S^1$ is homotopic to the identity. Let $Y$ be the homotopy fibre of $X\to S^1$
and consider 
\[ Y\times S^1\to X\times X \to X.\]
This induces an isomorphism of homotopy groups so $X\simeq Y\times S^1$. We clearly have $Y$ is finitely dominated so $\sigma(X)=0$ by Theorem 1.2.
\end{proof}

Another case where the conjecture has been verified is when $X$ has a classifying space i.~e. $X\simeq \Omega B$ \cite{bknp1}.

\begin{theorem} If $X\simeq \Omega B$ for some $\cw$-complex $B$, and $\oplus H_i(X,\bz)$ is a finitely generated abelian group, 
then $X$ is homotopy equivalent to a smooth, compact, parallellizable manifold.
\end{theorem}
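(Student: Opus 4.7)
The plan is to combine the finiteness obstruction machinery of this note with a standard regular neighborhood thickening. Since $X\simeq \Omega B$ is in particular an $H$-space and, by hypothesis, $\bigoplus_i H_i(X;\bz)$ is finitely generated, Mislin's theorem quoted above tells us that $X$ is finitely dominated, so Theorem \ref{tonetwo} associates to $X$ a finiteness obstruction $\sigma(X)\in \tilde K_0(\bz[\pi_1(X,x)])$. I would then invoke the result of \cite{bknp1}, mentioned in the paragraph preceding this theorem, which establishes the conjecture in the loop space case, i.e.\ $\sigma(\Omega B)=0$. By Theorem \ref{tonetwo}(4) this furnishes a finite $\cw$-complex $K$ with $K\simeq X$.

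To upgrade $K$ to a smooth compact parallelizable manifold, I would replace it up to homotopy by a finite simplicial complex (any finite $\cw$-complex admits such a model) and embed this complex in $\br^N$ for some large $N$. A closed regular neighborhood $M$ of the embedded complex is a compact codimension-zero submanifold-with-boundary of $\br^N$ that deformation retracts onto $K$, so $M\simeq X$. Because $M$ has the same dimension as the ambient Euclidean space, its tangent bundle is the restriction of $T\br^N$, hence trivial, and $M$ is therefore parallelizable. The smooth structure can be obtained by smoothing the PL regular neighborhood (standard for $N$ large) or, equivalently, by viewing $K$ as the spine of a smooth handlebody built from a handle decomposition.

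The principal obstacle in this program is the vanishing $\sigma(\Omega B)=0$: this is the genuinely non-trivial input of \cite{bknp1} and is not accessible from the tools developed in this note, since Mather's trick on its own yields only the weaker statement that $X\times S^1$ is homotopy equivalent to a finite complex. Once the finiteness obstruction is killed, the remaining ingredients — Mislin's criterion, Theorem \ref{tonetwo}(4), and the regular neighborhood thickening — are routine applications of general position and PL/smooth manifold theory.
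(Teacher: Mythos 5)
The paper does not actually prove this theorem: it quotes it from \cite{bknp1} and remarks only that ``the first step in the proof of this theorem is to deal with the finiteness obstruction,'' so there is no in-text argument to compare yours against. Judged on its own terms, your proposal has a genuine gap. Your first stage (Mislin's criterion gives finite domination, and $\sigma(\Omega B)=0$ gives a finite $\cw$-complex $K\simeq X$) is consistent with how the result is structured, though note you are citing the very paper whose theorem you are proving for that intermediate step rather than establishing it. The real problem is the thickening step: a regular neighborhood of $K$ in $\br^N$ is a compact manifold \emph{with non-empty boundary}. The theorem --- this is the main result of ``Finite loop spaces are manifolds'' --- asserts that $X$ is homotopy equivalent to a \emph{closed} smooth parallelizable manifold, in analogy with compact Lie groups. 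If ``compact manifold'' were read as allowing boundary, the statement would collapse to the finiteness assertion, since \emph{every} finite complex is the spine of a parallelizable regular neighborhood; the loop space hypothesis would play no role after step one, and there would be nothing left for the Acta paper to prove.

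To obtain a closed manifold one needs the genuinely geometric part of the argument, which is entirely absent from your proposal: Browder's theorem that a finite $H$-space satisfies Poincar\'e duality, the proof (using the classifying space $B$) that the Spivak normal fibration of $X$ is trivial, and then surgery theory to convert a degree one normal map from a closed stably parallelizable manifold into a homotopy equivalence, i.e.\ to arrange the vanishing of the relevant surgery obstruction. None of this is ``routine general position and PL/smooth manifold theory''; it is the content of \cite{bknp1} beyond the finiteness obstruction. Your closing paragraph correctly identifies $\sigma(\Omega B)=0$ as non-trivial input, but it is not the only such input.
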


The first step in the proof of this theorem is to deal with the finiteness obstruction.

Another example is finitely dominated nilpotent spaces \cite{elp1} . In this case there is a group between $0$ and $\tilde K_0(\bz[\pi])$ which describes precisely which 
elements can be realized by a nilpotent space with fundamental group $\pi$. There are examples where this group is strictly bigger than $0$ and strictly 
smaller than $\tilde K_0(\bz[\pi])$.

Another conjecture concerns classifying spaces of groups.

\begin{conjecture} Let $\Gamma$ be a group, $B\Gamma$ its classifying space. Assume $B\Gamma$ is finitely dominated, then $\sigma(B\Gamma)=0$
\end{conjecture}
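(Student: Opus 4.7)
Since $E\Gamma = \widetilde{B\Gamma}$ is contractible, the singular chain complex $S_*(E\Gamma)$ is chain homotopy equivalent as a complex of $\bz[\Gamma]$-modules to the trivial module $\bz$ concentrated in degree zero. Combining this with Theorem \ref{tonethree} applied to $X = B\Gamma$ produces a finite length resolution
\[ 0 \to P_k \to P_{k-1} \to \cdots \to P_0 \to \bz \to 0 \]
by finitely generated projective $\bz[\Gamma]$-modules; equivalently, $\Gamma$ is of type FP. Remark \ref{ronefour} then identifies $\sigma(B\Gamma) = \sum (-1)^i [P_i] - \chi(\Gamma) \in \tilde K_0(\bz[\Gamma])$, and the conjecture is reduced to the purely algebraic statement that this alternating sum lies in the image of $K_0(\bz) \to K_0(\bz[\Gamma])$---equivalently, that the ``even'' and ``odd'' halves of any finite projective resolution of the trivial module differ by a stably free module.

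I would then split into cases according to whether $\Gamma$ has torsion. If $\Gamma$ is torsion-free, the strongest form of the conclusion is $\tilde K_0(\bz[\Gamma]) = 0$, a special case of the $K$-theoretic Farrell--Jones conjecture; this is known for wide classes of groups (word-hyperbolic, CAT(0), torsion-free polycyclic-by-finite, and many others), and in every such case the conjecture is automatic. When $\Gamma$ has torsion, $\tilde K_0(\bz[\Gamma])$ can be genuinely nontrivial, and the plan would be to detect $\sigma(B\Gamma)$ via the Hattori--Stallings rank: pair $[P_*]$ with the trace at the identity conjugacy class, which recovers $\chi(\Gamma)$, and with each non-identity conjugacy class, which is conjecturally zero by a Bass-type statement. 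Combining this with transfer maps to the finite subgroups of $\Gamma$ should constrain $\sigma(B\Gamma)$ enough to conclude that it vanishes.

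The main obstacle is transparent: both the Farrell--Jones and Bass conjectures are themselves deep open problems, so a proof by the direct CW-theoretic manipulations used earlier in Section 1 is not available. The question has been reformulated as a structural assertion about $\tilde K_0(\bz[\Gamma])$ that only seems accessible via heavy algebraic or controlled $K$-theoretic machinery, and one should view this conjecture as a consequence of the standard $K$-theoretic conjectures rather than as an independent elementary statement.
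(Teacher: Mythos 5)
This statement is labelled a conjecture, and the paper offers no proof of it; the surrounding text only remarks that it has been verified in many cases because $\tilde K_0(\bz[\Gamma])=0$ has been proved for many torsion-free groups via the Hsiang (Farrell--Jones) conjecture. Your proposal is, by your own admission, not a proof either, and it is honest about that, so there is no false claim to flag. Your opening reduction is correct and is exactly the paper's point of view: contractibility of $\widetilde{B\Gamma}$ plus Theorem \ref{tonethree} shows that finite domination of $B\Gamma$ is equivalent to $\Gamma$ being finitely presented and of type FP, and $\sigma(B\Gamma)$ is the image of $\sum(-1)^i[P_i]$ in $\tilde K_0(\bz[\Gamma])$, so the conjecture is the assertion that this class is stably free.

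The one substantive error is your case split on torsion: the case where $\Gamma$ has torsion is vacuous. By Theorem \ref{tonetwo}(1), a finitely dominated $B\Gamma$ is homotopy equivalent to a finite dimensional $\cw$-complex, so $\Gamma$ has finite cohomological dimension; a nontrivial finite subgroup $C\le\Gamma$ would have $\widetilde{B\Gamma}/C$ as a finite dimensional model for $BC$, contradicting the fact that $H^*(C;\bz)$ is nonzero in arbitrarily high degrees. Hence the hypothesis already forces $\Gamma$ torsion-free, and the entire conjecture is subsumed by the prediction $\tilde K_0(\bz[\Gamma])=0$ for torsion-free $\Gamma$ --- which is precisely how the paper frames it. This makes your Hattori--Stallings/Bass discussion for torsion groups irrelevant here; and even where the Bass conjecture applies, the Hattori--Stallings rank is not injective on $\tilde K_0(\bz[\Gamma])$, so it could at best constrain, not determine, $\sigma(B\Gamma)$. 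With the torsion branch deleted, your proposal matches the paper's own (non-)proof: an open conjecture reduced to the vanishing of $\tilde K_0$ for torsion-free group rings.
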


This conjecture has also been verified in many cases since it has been proved in many cases that $\tilde K_0(\bz[\Gamma])=0$. This is 
a consequence of the Hsiang conjecture, see  W.-C. Hsiang \cite{hs1}, 
which says that the assembly map
\[B\Gamma_+\wedge K(\bz) \to K(\bz[\Gamma])\]
is a homotopy equivalence of spectra for $\Gamma$ torsion free. Here $K(-)$ is non-connective $K$-theory. The Hsiang conjecture has been verified in a large number of cases by Farrell, Jones, L\"uck, Bartels, 
Reich, and others.

Finally it is natural to mention the first case where the finiteness obstruction occurred at least morally, namely in R. Swan's work \cite{sw2} on $\cw$-complexes with universal cover 
homotopy equivalent to a sphere. Here Swan constructed many examples that were finitely dominated, but with non-trivial finiteness obstruction.

\providecommand{\bysame}{\leavevmode\hbox to3em{\hrulefill}\thinspace}
\providecommand{\MR}{\relax\ifhmode\unskip\space\fi MR }
\providecommand{\MRhref}[2]{%
  \href{http://www.ams.org/mathscinet-getitem?mr=#1}{#2}
}
\providecommand{\href}[2]{#2}

\end{document}